\newtheorem{theorem}{Theorem}[section]
\numberwithin{equation}{section}
\newtheorem{proposition}[theorem]{Proposition}
\newtheorem{remark}[theorem]{Remark}
\renewcommand{\ell}{l}
\renewcommand{\epsilon}{\varepsilon}
\def\ga{\gamma}
\def\px{\langle x \rangle}
\def\rd{\bR^d}
\newcommand{\Fur}{\mathcal{F}}
\def\R{\right)}
\def\<{\left<}
\def\>{\right>}
\def\mv1{M_v^1}
\def\mn{(m,n)}
\def\mn'{(m',n')}
\newcommand{\bC}{\mathbb{C}}
\def\N{\mathbb{N}}
\def\R{\mathbb{R}}
\def\C{\mathbb{C}}
\def\rd{\mathbb{R}^d}
\begin{document}

\title[]{Pointwise decay and smoothness for semilinear elliptic equations and travelling waves}
\begin{abstract}
We derive sharp decay estimates and prove holomorphic extensions for the solutions of a class of semilinear nonlocal elliptic equations
with linear part given by a sum of Fourier multipliers with finitely smooth symbols at the origin. Applications concern the decay and 
smoothness of travelling waves for nonlinear evolution equations in fluid dynamics and plasma physics.
\end{abstract}
\author{Marco Cappiello \and Fabio Nicola}
\address{Dipartimento di Matematica,  Universit\`{a} degli Studi di Torino,
Via Carlo Alberto 10, 10123
Torino, Italy
} \email{marco.cappiello@unito.it}
\address{Dipartimento di Scienze Matematiche, Politecnico di
Torino, Corso Duca degli
Abruzzi 24, 10129 Torino,
Italy}
\email{fabio.nicola@polito.it}
\subjclass[2000]{}
\date{}
\keywords{Algebraic decay,
holomorphic extensions, nonlocal semilinear equations, travelling waves}
\maketitle

\section{Introduction}
The aim of this note is to derive pointwise decay estimates for a class of semilinear elliptic equations in $\R^d$ of the form
\begin{equation} \label{equation}
p(D)u=F(u),
\end{equation}
where $F:\bC\to\bC $ is any measurable function satisfying 
\begin{equation}\label{nonlin}
|F(u)|\leq C_K |u|^{p}
\end{equation}
for some $p>1$, uniformly for $u$ in compact subsets $K\subset\bC$, and 
$p(D)$ is a Fourier multiplier with symbol of the form 
\begin{equation}\label{simbolo}
p(\xi)=\sum_{j=0}^h p_{m_j}(\xi)
\end{equation}
where $p_{m_j}$ are positively homogeneous functions of degree $m_j$, with $0=m_0<m_1<m_2<\ldots<m_h=M$.  
\par
Equations of this form frequently appear in Mathematical Physics, in particular in the theory of travelling wave solutions of nonlinear evolution
equations in fluid dynamics (KdV-type, long-wave-type, Benjamin-Ono and many others) or in the frame of 
stationary Schr\"odinger equations. In \cite{BL1, BL2}, Bona and Li proved that when the symbol $p(\xi)$ is smooth and elliptic on $\R$,
then every solution $u$ of \eqref{equation} which tends to $0$ at infinity indeed exhibits an exponential decay of the form $e^{-c|x|}$ 
for some positive constant $c$. In addition, if $p(\xi)$ satisfies uniform analytic estimates on $\R$, then also $u$ is analytic and it extends to a holomorphic function in a strip of the form $\{x+iy \in \C : |y| < T\}$ for some $T>0$. Similar results have been proved later for more general pseudodifferential equations in arbitrary dimension, see \cite{A3, CGR2, CGR3, CN1, CN2}. More recently in \cite{CGRNag} the first author et al. considered the case when the functions $p_{m_j}(\xi)$ in \eqref{simbolo} may present some \emph{finite} smoothness at $\xi=0$. As a difference with respect to the case of $C^\infty$ symbols, a finite smoothness of the symbol may determine a loss of the exponential decay observed in \cite{A3, BL1, BL2, CGR2, CGR3, CN1, CN2} and the appearance of an algebraic decay. This phenomenon has been observed in \cite{AT, Benjamin, M, Ono} on the travelling waves of the Benjamin-Ono equation and its generalizations, which solve equations of the form \eqref{equation}. Recently, in \cite{CGRNag}, $L^2$-algebraic decay estimates have been proved for the homoclinic solutions of \eqref{equation}. In \cite{CNDCDS} we improved the latter result by proving \textit{pointwise} decay estimates. Namely, we proved that if 
\begin{equation}\label{ellitticita2}
|p(\xi)|\geq C\langle \xi\rangle^M,\quad \xi\in\rd,\ \xi\not=0,
\end{equation}
for some $C>0$ (as usual $\langle \xi\rangle=(1+|\xi|^2)^{1/2}$), then
every solution $u$ of \eqref{equation} such that $\px^{\varepsilon_o}u \in L^\infty (\R^d)$ for some $\epsilon_0>0$ actually satisfies 
\begin{equation}\label{decDCDS}
\px^{d+m}u \in L^\infty (\R^d),
\end{equation}
where $m$ is the smallest $m_j$ for which $p_{m_j}$ is not smooth. This result however has been proved under the additional condition $m>0$, namely when the zero order term $p_0(\xi)$ is smooth in $\rd$ and hence constant: $p_0(\xi)=p_0\in\mathbb{C}$ (and $p_0\not=0$ by \eqref{ellitticita2}). The decay estimate \eqref{decDCDS} is sharp as can be checked on some basic examples, cf. \cite{CGRNag, CNDCDS}. \par
 It is natural to look for similar results in the presence of a non constant zero other term $p_0(\xi)$, which corresponds to $m=m_0=0$ in the above discussion (by letting $\xi\to 0$ in \eqref{ellitticita2} we see that $p_0(\xi)\not=0$ for $\xi\not=0$). This situation occurs for instance in the study of travelling wave type solutions to nonlinear equations from fluid dynamics and plasma physics. To be definite let us consider the following equation 
\begin{equation}\label{LCeq}
v_t + \mu H v_x + \beta H v_{xx}- \nu v_{xx} +2vv_x=0, \qquad t \geq 0 , x \in \R,
\end{equation}
where $\beta, \mu, \nu$ are real parameters and $H=H(D)$ stands for the Hilbert transform, namely the Fourier multiplier with symbol $h(\xi)=-i\,\textrm{sign}\,\xi$. The equation \eqref{LCeq} has been treated in \cite{LC} in view of its connections with the modelization of plasma turbulence.
Observe moreover that when $\beta=1$ and $\mu=\nu=0,$ the equation \eqref{LCeq} reduces to the Benjamin-Ono equation (cf.\ \cite{Benjamin, Ono}), whereas for $\beta =0$ and $\mu =-1$, we obtain the Sivashinsky equation
\begin{equation}\label{Siveq}
v_t - H v_x - \nu v_{xx} +2vv_x=0, \qquad t \geq 0 , x \in \R,
\end{equation}
which governs the dynamics of wrinkled flame fronts, cf.\ \cite{Siv1, TFH}.
When looking for solutions of \eqref{LCeq} of the form $v(t,x)=u(x-ct), c \in \R$, we are reduced to consider the equation 
\begin{equation}
\label{travLCeq}
-cu'+\mu H u'+\beta Hu''-\nu u'' +2uu'=0,
\end{equation}
which in turn can be written in the conservative form 
\begin{equation} \label{cons}
-cu+\mu Hu+\beta Hu'-\nu u' =-u^2.
\end{equation}
Notice that \eqref{cons} is of the form \eqref{equation}, \eqref{simbolo} with $F(u)=-u^2$ and 
\begin{equation} \label{simbcons}
p(\xi)= -c-i\mu\, \textrm{sign}\, \xi + \beta |\xi| -i\nu \xi.\end{equation}
By direct computation one can verify that if $\mu <0, \nu >0$ and $c= \beta \mu/ \nu$, the equation \eqref{cons} admits the solution
\begin{equation}\label{solLC} u(x)= -\frac{2\nu x +2b \beta}{x^2+b^2},
\end{equation}
with $b = -\nu/\mu$. This solution decays like $O(|x|^{-1})$ for $|x| \to \infty$ and admits a holomorphic extension in the strip $\{x+iy \in \C: |y|< b\}.$
Notice that the symbol $p(\xi)$ in \eqref{simbcons} is given by
$$p(\xi) = \beta  \left( -\frac{\mu}{\nu}+|\xi| \right)-i(\nu \xi + \mu\, \textrm{sign}\,\xi),$$
which satisfies the condition \eqref{ellitticita2} if, in addition to $\mu<0,\,\nu>0$, we have $\beta\not=0$.

\par
Motivated by the example above, in this note we prove pointwise decay estimates and holomorphic extensions for the solutions of the general equation \eqref{equation}, \eqref{nonlin}, \eqref{simbolo}, \eqref{ellitticita2}, extending the results proved in \cite{CNDCDS} to the case $m=m_0=0$. \par Namely, we have the following results.

\begin{theorem}\label{mainteo}
With the above notation, assume \eqref{nonlin}, \eqref{simbolo}, \eqref{ellitticita2}. Let $u$ be a distribution solution of $p(D)u=F(u)$, satisfying $\langle x\rangle ^{\epsilon_0} u\in L^\infty(\R^d)$ for some $\epsilon_0>0$. Then $\langle x\rangle ^{d} u\in L^\infty (\R^d)$. 
\end{theorem}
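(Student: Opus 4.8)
The plan is to represent $u$ as the convolution of $F(u)$ with a fundamental solution of $p(D)$ and then to estimate the decay of the corresponding kernel. By \eqref{ellitticita2} the symbol $1/p(\xi)$ is bounded (and $p$ is smooth and elliptic of order $M$ away from $\xi=0$), so $E:=\mathcal{F}^{-1}(1/p)\in\mathcal{S}'(\rd)$ is well defined and $p(D)E=\delta$. First I would note that $\langle x\rangle^{\epsilon_0}u\in L^\infty$ together with \eqref{nonlin} gives $F(u)\in L^\infty(\rd)$ with $|F(u)(x)|\le C\langle x\rangle^{-p\epsilon_0}$; granting the decay of $E$ established below, the convolution $E\ast F(u)$ is then well defined and $w:=u-E\ast F(u)$ solves $p(D)w=0$, so $\widehat w$ is supported at the only zero $\xi=0$ of $p$ and $w$ is a polynomial. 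Since both $u$ and $E\ast F(u)$ tend to $0$ at infinity, $w\equiv 0$ and $u=E\ast F(u)$. (One may also assume $\epsilon_0<d$, the case $\epsilon_0\ge d$ being trivial.)

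The heart of the matter is the kernel estimate $|E(x)|\le C\langle x\rangle^{-d}$ for $|x|\ge 1$. To prove it I would split $1/p=\phi/p+(1-\phi)/p$ with $\phi\in C^\infty_c(\rd)$ equal to $1$ near the origin. The piece $(1-\phi)/p$ is a symbol of order $-M<0$, smooth on all of $\rd$, so its inverse Fourier transform lies in $L^1_{loc}$ and is rapidly decreasing for $|x|\ge1$, hence harmless. For the low-frequency piece $E_1:=\mathcal{F}^{-1}(\phi/p)\in C_b(\rd)$ I would use a dyadic decomposition $\phi=\sum_{j\ge0}\psi(2^j\cdot)$ with $\psi$ supported in an annulus. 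Setting $g_j:=\psi(2^j\cdot)/p$ and rescaling, $E_{1,j}:=\mathcal{F}^{-1}g_j$ satisfies $E_{1,j}(x)=2^{-jd}H_j(2^{-j}x)$ with $H_j=\mathcal{F}^{-1}\big(\psi/p(2^{-j}\cdot)\big)$. Now
\begin{equation*}
p(2^{-j}\eta)=p_0(\eta)+\sum_{k=1}^h 2^{-jm_k}p_{m_k}(\eta)\longrightarrow p_0(\eta)\quad\text{in }C^\infty\text{ on }\operatorname{supp}\psi,
\end{equation*}
where $p_0$, being positively homogeneous of degree $0$ and smooth off the origin, is bounded and bounded away from $0$ on $\operatorname{supp}\psi$ (the latter following from \eqref{ellitticita2} by letting $\xi\to0$). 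Hence $\psi/p(2^{-j}\cdot)\to\psi/p_0$ in $\mathcal{S}(\rd)$ with uniformly bounded seminorms (for $j$ large; the finitely many remaining $E_{1,j}$ are Schwartz), so $|H_j(y)|\le C_N\langle y\rangle^{-N}$ uniformly in $j$ and $|E_{1,j}(x)|\le C_N2^{-jd}\langle 2^{-j}x\rangle^{-N}$. Summing over $j\ge0$ and splitting at $2^j\sim|x|$ yields $\sum_j|E_{1,j}(x)|\le C\langle x\rangle^{-d}$. The homogeneity of degree $0$ of $p_0$ is exactly what forces the critical exponent $d$ here, in accordance with the sharp decay $O(|x|^{-1})$ of the explicit solution \eqref{solLC} in dimension $d=1$.

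Finally I would bootstrap by means of the elementary inequality: for $a,b>0$,
\begin{equation*}
\int_{\rd}\langle x-y\rangle^{-a}\langle y\rangle^{-b}\,dy\le C\langle x\rangle^{-\min\{a,b,a+b-d\}}\log\langle x\rangle,
\end{equation*}
with the logarithm unnecessary when, say, $a=d<b$ (in which case the right-hand side is $C\langle x\rangle^{-d}$). Starting from $|u(x)|\le C\langle x\rangle^{-\epsilon_0}$, so $|F(u)(x)|\le C\langle x\rangle^{-p\epsilon_0}$, the identity $u=E\ast F(u)$ combined with $|E(x)|\le C\langle x\rangle^{-d}$ and the inequality above (with $a=d$) upgrades the decay of $u$ to $\langle x\rangle^{-s'}$ for every $s'<\min\{p\epsilon_0,d\}$; iterating and using $p>1$, after finitely many steps one obtains $|u(x)|\le C\langle x\rangle^{-s}$ for some $s$ with $d/p<s<d$. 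Then $|F(u)(x)|\le C\langle x\rangle^{-ps}$ with $ps>d$, and one last application of $u=E\ast F(u)$ with the clean case $a=d<b=ps$ of the inequality gives $|u(x)|\le C\langle x\rangle^{-d}$, i.e.\ $\langle x\rangle^d u\in L^\infty(\rd)$.

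I expect the main obstacle to be precisely the kernel estimate in the borderline regime $m=m_0=0$: in contrast with the case $m>0$ of \cite{CNDCDS}, here $p_0$ is genuinely non-constant and only homogeneous (possibly non-smooth at $\xi=0$), $1/p$ is not smooth at the origin, and $E$ decays exactly like $|x|^{-d}$, which fails to be integrable. This borderline decay must be handled with care both in making sense of $E\ast F(u)$ and, above all, in the last bootstrap step, where the logarithmic loss is avoided only because the incoming decay of $F(u)$ has been made strictly faster than $\langle x\rangle^{-d}$ before the convolution with $E$.
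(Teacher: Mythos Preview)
Your argument is correct and follows the same global scheme as the paper: write $u=p(D)^{-1}F(u)$, establish that the kernel of $p(D)^{-1}$ decays like $\langle x\rangle^{-d}$, and then bootstrap through the nonlinearity. The bootstrap step and the final ``clean'' convolution estimate (your case $a=d<b$) are exactly the content of the paper's Propositions~\ref{pro2-b} and~\ref{pro5-b} and the short iteration in the paper's proof of Theorem~\ref{mainteo}.

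Where you genuinely differ is in the low-frequency kernel estimate. The paper factors
\[
\frac{\chi(\xi)}{p(\xi)}=\frac{\chi(\xi)}{p_0(\xi)}\cdot\frac{1}{1+\sum_{j\geq 1}p_{m_j}(\xi)/p_0(\xi)},
\]
treats the second factor as a perturbation of the identity (its symbol has the form covered by Proposition~\ref{pro1}, with strictly positive lowest non-constant degree), and handles the first factor via Proposition~\ref{pro4} on Fourier transforms of cut-off homogeneous distributions of degree~$0$, obtaining a kernel in $L^\infty_{v_d}$. You instead run a dyadic decomposition of $\phi/p$ towards $\xi=0$, rescale, and use that $p(2^{-j}\eta)\to p_0(\eta)$ in $C^\infty$ on the fixed annulus $\operatorname{supp}\psi$ to get uniform Schwartz bounds on the rescaled pieces; summing the resulting $2^{-jd}\langle 2^{-j}x\rangle^{-N}$ bounds yields $|E_1(x)|\lesssim\langle x\rangle^{-d}$ directly. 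Your route is more self-contained and makes the mechanism ``degree~$0$ homogeneity $\Rightarrow$ critical $\langle x\rangle^{-d}$ decay'' completely explicit; the paper's route is more modular, recycling the machinery of \cite{CNDCDS} (in particular Propositions~\ref{pro1}, \ref{pro4}). You are also a bit more careful than the paper in justifying the identity $u=E\ast F(u)$ (via $p(D)w=0\Rightarrow\widehat w$ supported at $0\Rightarrow w$ polynomial $\Rightarrow w\equiv 0$), which the paper takes for granted.
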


We have also the following estimates for the derivatives, when $F$ is smooth.
\begin{theorem}\label{mainteo2}
Assume \eqref{simbolo}, \eqref{ellitticita2} and let $F\in C^\infty(\mathbb{C})$, with $F(0)=0$, $F'(0)=0$.
Let $u$ be a distribution solution of $p(D)u=F(u)$, satisfying $\langle x\rangle ^{\epsilon_0} u\in L^\infty(\rd)$ for some $\epsilon_0>0$.\par
 Then $u$ is smooth and satisfies the estimates
\begin{equation}\label{stime2}
\|\langle \cdot \rangle^{d+|\alpha|-\varepsilon}\partial^\alpha u\|_{L^\infty}<\infty,\quad \alpha\in\mathbb{N}^d,
\end{equation}
for every $\varepsilon >0$.\par
In dimension $d=1$, we have the slightly stronger estimate
\begin{equation}\label{stime2bisbis}
\|\langle \cdot \rangle^{1+\alpha}\partial^\alpha u\|_{L^\infty}<\infty,\quad \alpha\in\mathbb{N}.
\end{equation}
\end{theorem}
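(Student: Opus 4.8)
The plan is to combine Theorem~\ref{mainteo} with kernel estimates for the fundamental solution of $p(D)$ and an iteration argument. First, since $F\in C^\infty(\bC)$ with $F(0)=F'(0)=0$, Taylor's formula gives $|F(u)|\le C_K|u|^2$ on compacts, so \eqref{nonlin} holds (with exponent $2$) and Theorem~\ref{mainteo} yields $\langle x\rangle^{d}u\in L^\infty(\rd)$. To see that $u$ is smooth, set $K=\Fur^{-1}(1/p)$: by \eqref{ellitticita2} one has $1/p\in L^\infty(\rd)$ and $1/p(\xi)=O(\langle\xi\rangle^{-M})$, so $K$ is a well-defined tempered distribution, $v:=u-K\ast F(u)$ solves $p(D)v=0$, hence $\widehat v$ is supported in $\{p=0\}\subseteq\{0\}$, $v$ is a polynomial, and $v\equiv0$ since $u$ and $K\ast F(u)$ vanish at infinity; thus $u=K\ast F(u)$. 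As convolution with $K$ is smoothing of order $M$ on the H\"older--Zygmund scale and composition with $F$ (which fixes $0$) preserves local H\"older regularity, a standard bootstrap gives $u\in C^\infty$ with all derivatives bounded, and then interpolation on unit balls between $\langle x\rangle^{d}u\in L^\infty$ and the boundedness of the higher derivatives produces the preliminary decay $|\partial^\alpha u(x)|\le C_{\alpha,\de}\langle x\rangle^{-d+\de}$ for all $\alpha$ and all $\de>0$.

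The technical core will be a kernel estimate: for every $\alpha\in\mathbb{N}^d$ and every $\eps>0$,
\begin{equation*}
|\partial^\alpha K(x)|\le C_{\alpha,\eps}\,\langle x\rangle^{-d-|\alpha|+\eps}\qquad (|x|\ge1),
\end{equation*}
with $\eps=0$ admissible in dimension $d=1$; moreover $\widehat{\partial^\alpha K}(\xi)=(i\xi)^\alpha/p(\xi)$ vanishes to order $|\alpha|$ at the origin, whence $\int_{\rd}x^\gamma\partial^\alpha K(x)\,\ud x=0$ for $|\gamma|<|\alpha|$, and near $x=0$ the distribution $\partial^\alpha K$ is the kernel of a classical elliptic parametrix of order $|\alpha|-M$. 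I would prove this by the usual splitting $1/p=\psi/p+(1-\psi)/p$ with $\psi\in C^\infty_c$ equal to $1$ near $0$: on the support of $1-\psi$, $1/p$ is a smooth symbol of order $-M$ and is handled by the standard pseudodifferential calculus, while $\psi/p$ carries the singularity at $\xi=0$, where by \eqref{simbolo} and \eqref{ellitticita2} $1/p$ agrees with $1/p_0$ (homogeneous of degree $0$) up to lower order homogeneous corrections of positive degree; the finite smoothness of $p_0$ on the unit sphere then produces, via the Fourier transform of homogeneous distributions, precisely the stated algebraic decay, and multiplication by $\xi^\alpha$ both shifts the homogeneity (giving the exponent $-d-|\alpha|$) and forces the vanishing moments. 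This is exactly the point at which the case $m=m_0=0$ treated here departs from \cite{CNDCDS}, where $p_0$ was constant and the leading singularity of $1/p$ at the origin sat at positive order. In $d=1$ the relevant homogeneous distributions are explicit (finite parts of $x_{\pm}^{-d-|\alpha|}$ and derivatives of $\delta$), which removes the $\eps$-loss.

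With this in hand, the estimates \eqref{stime2} and \eqref{stime2bisbis} follow by induction on $|\alpha|$, the case $|\alpha|=0$ being Theorem~\ref{mainteo}. For $|\alpha|=N\ge1$, assuming the claim for lengths $<N$: the Fa\`a di Bruno formula together with $F(0)=F'(0)=0$ gives $\partial^\gamma[F(u)]=O(\langle x\rangle^{-2d-|\gamma|+C\eps})$ for $|\gamma|\le N-1$ and $\partial^\alpha[F(u)]=F'(u)\,\partial^\alpha u+O(\langle x\rangle^{-2d-N+C\eps})$, with $|F'(u)|\le C|u|=O(\langle x\rangle^{-d})$. Writing $\partial^\alpha u=(\partial^\alpha K)\ast F(u)$ and splitting the integral into the regions $|z|\le|x|/2$, $|z-x|\le|x|/2$ and the complement: in the first, one Taylor-expands $F(u)(x-z)$ to order $N-1$ and uses the vanishing moments of $\partial^\alpha K$, the remainder being $O(|z|^N)\big(\langle x\rangle^{-d}\sup_{|w-x|\le|x|/2}|\partial^\alpha u(w)|+\langle x\rangle^{-2d-N+C\eps}\big)$; in the second one Taylor-expands $\partial^\alpha K$ about $z=x$ and uses $F(u)\in L^1\cap L^\infty$, the leading term being $(\mathrm{const})\,\partial^\alpha K(x)=O(\langle x\rangle^{-d-N+\eps})$; the remaining region contributes at a faster rate. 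Hence $|\partial^\alpha u(x)|\le C\langle x\rangle^{-d-N+\eps}+C\langle x\rangle^{-d+\eps}\sup_{|w-x|\le|x|/2}|\partial^\alpha u(w)|$, and inserting the preliminary bound $|\partial^\alpha u|\le C\langle x\rangle^{-\sigma}$ (initially $\sigma=d-\de$ from the first step) improves it to $\sigma'=\min(d+N-\eps,\,d+\sigma-\eps)$; after finitely many iterations $\sigma=d+N-\eps$, which is \eqref{stime2}. In $d=1$ the kernel estimate is $\eps$-free and the same iteration terminates at $\sigma=1+N$, giving \eqref{stime2bisbis}.

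The hard part will be the kernel estimate in the regime $m=0$: replacing a constant $p_0$ by a merely finitely smooth one turns the singularity of $1/p$ at $\xi=0$ from a positive-order vanishing into a bounded discontinuity, and one must read off the sharp algebraic decay rate — and the vanishing moments — from the finite smoothness of $p_0$ on the sphere (and, in $d\ge2$, account for the possible logarithmic/$\eps$ losses coming from critical homogeneities). A secondary, more bookkeeping-type issue is arranging the iteration so that the self-referential top-order term $F'(u)\,\partial^\alpha u$ is absorbed; this works precisely because $F'(u)=O(\langle x\rangle^{-d})$, so each pass through the argument gains a factor $\langle x\rangle^{-d}$ of decay until the optimal rate is reached.
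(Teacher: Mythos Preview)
Your strategy is genuinely different from the paper's. The paper never writes down pointwise kernel bounds or splits the convolution in physical space; instead it expresses $x^\beta\partial^\alpha u$ through commutators,
\[
x^\beta\partial^\alpha u=p(D)^{-1}[(\chi p)(D),x^\beta]\partial^\alpha u+p(D)^{-1}[((1-\chi)p)(D),x^\beta]\partial^\alpha u+p(D)^{-1}\big(x^\beta\partial^\alpha F(u)\big),
\]
and handles each piece by the mapping properties of multipliers on weighted spaces (Propositions~\ref{pro1}--\ref{pro5-b}). The $\varepsilon$-loss in \eqref{stime2} enters through Proposition~\ref{pro2-b}: convolution with a kernel in $L^\infty_{v_d}$ only maps $L^\infty_{v_r}\to L^\infty_{w_r}$, with a logarithmic weight. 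Your kernel/Taylor-expansion route is a legitimate alternative for \eqref{stime2}; the bookkeeping for the local singularity of $\partial^\alpha K$ when $|\alpha|\ge M$ (so that the ``vanishing moments'' must be read distributionally, e.g.\ after peeling off the high-frequency part) is glossed over but fixable.

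The real gap is your argument for \eqref{stime2bisbis}. You locate the $\varepsilon$-loss in the kernel estimate and claim it disappears in $d=1$ because the relevant homogeneous distributions are explicit. But in fact $|\partial^\alpha K(x)|\lesssim\langle x\rangle^{-d-|\alpha|}$ already holds with $\varepsilon=0$ in \emph{every} dimension (expand $\xi^\alpha/p(\xi)$ near $0$ as a finite sum of homogeneous terms of degree $\ge|\alpha|$ plus a $C^N_c$ remainder and apply Proposition~\ref{pro4}). The loss in your scheme actually comes from the remainder integral
\[
\int_{1\le|z|\le|x|/2}|\partial^\alpha K(z)|\,|z|^N\,dz\ \asymp\ \int_{1\le|z|\le|x|/2}|z|^{-d}\,dz\ \asymp\ \log\langle x\rangle,
\]
and this logarithm is present for $d=1$ just as for $d\ge2$; making the kernel bound sharp does nothing to remove it. The paper's mechanism in $d=1$ is algebraic and specific: since $p_0(\xi)$ is piecewise constant on $\mathbb{R}\setminus\{0\}$, one has $\xi^\gamma\partial^\gamma p_0(\xi)\equiv0$ for $\gamma\ge1$, which annihilates exactly the degree-$0$ contribution in the commutator expansion that would otherwise force the use of Proposition~\ref{pro2-b}. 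In your language, this identity would have to be exploited as an additional cancellation in the leading $|z|^{-1}$ part of $z^N\partial^N K(z)$; as written, your proposal does not supply such a cancellation, so the $d=1$ claim is not yet justified.
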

Here it is meant that $F$ is smooth with respect to the structure of $\C$ as a real vector space, and with $F'$ we denote its differential. Observe, in particular, that the above assumptions on $F$ imply \eqref{nonlin}.\par

In \cite{CNDCDS} we obtained the sharp decay for the derivatives of the solutions in every dimension, namely $\px^{d+m+|\alpha|}\partial^\alpha u \in L^\infty,$ provided $m>0$. In the present case ($m=0$) the situation is more complicate since this case represents a critical threshold for the mapping properties of the convolution operators involved in the sequel, cf.\ the next Proposition \ref{pro2-b}. Nevertheless, the above result gives the optimal decay for the derivatives at least in dimension $d=1$, which includes the equation \eqref{travLCeq}, as it is evident from\ \eqref{solLC}.

In the case when the nonlinear term $F(u)$ is a polynomial, we can also prove the analyticity of the solution and its holomorphic extension in a strip in the complex domain.
We do not include the proof of this result since it can be obtained as in \cite{CNDCDS} without any modification.

\begin{theorem}\label{mainteo3}
Assume \eqref{simbolo}, \eqref{ellitticita2} and let $F$ be a polynomial in $u,\overline{u}$, with $F(0)=0$, $F'(0)=0$.
Let $u$ be a distribution solution of $p(D)u=F(u)$, satisfying $\langle x\rangle ^{\epsilon_0} u\in L^\infty(\R^d)$ for some $\epsilon_0>0$.\par
 Then there exists $\epsilon>0$ such that $u$ extends to a bounded holomorphic function $u(x+iy)$ in the strip $\{z=x+iy\in\mathbb{C}^d:\,|y|<\epsilon\}$.  
\end{theorem}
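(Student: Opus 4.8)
The plan is to obtain the holomorphic extension of $u$ directly from the equation, via the convolution representation $u=E*F(u)$ with $E:=\cF^{-1}(1/p)$, and the decay already proved in Theorem~\ref{mainteo} (together with the smoothness of Theorem~\ref{mainteo2}; both apply, since a polynomial $F$ with $F(0)=F'(0)=0$ is $C^\infty$ and satisfies \eqref{nonlin}). This is the scheme of \cite{CNDCDS}; the only point that genuinely needs to be revisited is the analysis of the kernel $E$ in the critical case $m=m_0=0$.

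\emph{Step 1 (the kernel $E$).} By \eqref{ellitticita2}, $1/p$ is bounded on $\rd$ and decays like $\langle\xi\rangle^{-M}$, so $E$ is a tempered distribution, locally integrable because $M>0$. Away from $\xi=0$ each homogeneous piece $p_{m_j}$ is real analytic, with the usual uniform bounds for its derivatives on the unit sphere; hence $p$, and then $1/p$ by \eqref{ellitticita2}, is real analytic and satisfies analytic-type estimates on $\rd\setminus\{0\}$. Arguing as in \cite{CGRNag, CNDCDS}, one deduces that $E$ is real analytic on $\rd\setminus\{0\}$, extends to a holomorphic function on a strip $\{x+iy\in\C^d:\ |y|<\delta\}$ minus a lower dimensional exceptional set (meeting $\rd$ only at the origin) across which $E$ remains locally integrable, and that
\[
|E(x+iy)|\le C_\eta\,\langle x\rangle^{-d+\eta}\qquad\text{for }|y|\le\delta/2,
\]
for every $\eta>0$. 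The exponent $-d$ reflects $m=0$, and this is precisely the borderline situation already singled out before Proposition~\ref{pro2-b}.

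\emph{Step 2 (from the equation to the extension).} By Theorem~\ref{mainteo}, $\langle x\rangle^d u\in L^\infty$; then $u$ takes values in a fixed compact set, so \eqref{nonlin} gives $|F(u(x))|\le C|u(x)|^2\le C'\langle x\rangle^{-2d}$, whence $F(u)\in L^1(\rd)\cap L^\infty(\rd)$. Taking Fourier transforms in $p(D)u=F(u)$ and dividing by $p$ (legitimate, as $1/p\in L^\infty$ and $\widehat{F(u)}$ is bounded and continuous) gives $\widehat u=\widehat{F(u)}/p$, i.e.\ $u=E*F(u)$. I would then set, for $z=x+iy$ with $|y|<\delta/2$,
\[
\widetilde u(z):=\int_{\rd}E(z-t)\,F(u(t))\,\mathrm{d}t .
\]
Since $|E(z-t)\,F(u(t))|\le C_\eta\,\langle x-t\rangle^{-d+\eta}\langle t\rangle^{-2d}$ is, locally uniformly in $z$, dominated by an $L^1_t$ function — using $2d>d$ together with the local integrability of the singularities of $E$ as $z-t$ meets the exceptional set, handled exactly as in \cite{CNDCDS} — a standard argument (holomorphy away from the exceptional set by differentiation under the integral, continuity across it, hence holomorphy by a removable-singularity argument) shows that $\widetilde u$ is holomorphic on $\{|y|<\delta/2\}$, with $|\widetilde u(x+iy)|\le C_\eta\langle x\rangle^{-d+\eta}$. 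On $\rd$ it coincides with $E*F(u)=u$, which is smooth by Theorem~\ref{mainteo2}; hence $\widetilde u$ is the holomorphic extension of $u$, and Theorem~\ref{mainteo3} follows with $\epsilon=\delta/2$. The dependence of $F$ on $\overline u$ plays no role here: $F(u)$ is evaluated only on $\rd$, and the holomorphy of $\widetilde u$ comes entirely from that of $z\mapsto E(z-t)$.

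The main obstacle is Step 1: producing the holomorphic extension of $E$ to a genuine strip together with the sharp algebraic bound $\langle x\rangle^{-d+\eta}$. This is the critical threshold $m=0$, where — as noted for the derivative estimates in Theorem~\ref{mainteo2} — the convolution operators $E*\,\cdot\,$ only barely fail to be bounded on the natural weighted $L^\infty$ spaces; one has to combine the ellipticity \eqref{ellitticita2}, the off-origin analyticity of the $p_{m_j}$, and the homogeneity/contour-deformation analysis of \cite{CGRNag, CNDCDS} so as to control simultaneously the decay of $E$ at infinity (governed by the weakest finite smoothness of $p$ at $\xi=0$) and the order of its singularities. Once this is in hand, the rest is routine, which is why \cite{CNDCDS} can assert that the proof goes through ``without any modification''.
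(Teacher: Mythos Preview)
The paper does not actually give a proof of Theorem~\ref{mainteo3}; it only asserts that the argument in \cite{CNDCDS} carries over ``without any modification''. Your proposal therefore already supplies more detail than the paper itself, and the scheme you outline --- analyse the convolution kernel $E=\mathcal{F}^{-1}(1/p)$, extend it holomorphically with an algebraic bound, and then define $\widetilde u(z)=\int E(z-t)\,F(u(t))\,dt$ --- is a standard and, once Step~1 is available, essentially complete route to the result.

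There is, however, a tension between your write-up and the paper's one-line claim. You flag Step~1 (the kernel bound $|E(x+iy)|\le C_\eta\langle x\rangle^{-d+\eta}$ in the critical case $m=0$) as something that ``genuinely needs to be revisited'', whereas the paper insists no modification is required. Two readings are possible. Either the kernel analysis in \cite{CNDCDS} already works for all $m\ge 0$ --- the holomorphic extension of $E$ to a strip comes from the analyticity of $1/p$ for large $\xi$ (contour deformation), while the singularity at $\xi=0$ only governs the \emph{rate} of algebraic decay of $E$, and any rate better than $\langle x\rangle^{-d+\eta}$ suffices since $F(u)\in L^\infty_{v_{2d}}$ by Theorem~\ref{mainteo} --- in which case your concern is overstated. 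Or \cite{CNDCDS} proves analyticity by a different mechanism, for instance by establishing factorial derivative bounds $\|\partial^\alpha u\|_{L^\infty}\le C^{|\alpha|+1}\alpha!$ via induction on the equation, using only the $L^\infty_{v_r}$-mapping properties of $p(D)^{-1}$; that argument would be genuinely insensitive to $m$, and would explain the ``no modification'' remark more cleanly. Either way the conclusion holds, and your sketch is a legitimate, more explicit substitute for the paper's bare reference --- just be aware that the obstacle you highlight may be less severe than you suggest.
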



\section{Notation and preliminary results}
In the sequel the Fourier transform of a function or temperate distribution $f$ is normalized as 
\[
\widehat{f}(\xi)=\Fur f(\xi)=\int_{\rd} e^{-ix\xi} f(x)\, dx.
\]
We already have used in the Introduction the notation $p(D)$ for the Fourier multiplier
\[
p(D) f=(2\pi)^{-d}\int_{\rd} e^{ix\xi} p(\xi)\widehat{f}(\xi)\, d\xi
\]
with symbol $p(\xi)$.
Such an operator is a convolution operator 
\[
p(D)f=K\ast f
\]
with the convolution kernel $K=\Fur^{-1}(p)$. 

Let $w(x)>0$ be a measurable function. We define the weighted Lebesgue spaces
\[
L^\infty_{w}=\{u\in L^\infty(\rd):\ \|u\|_{L^\infty_{w}}:=\|w u\|_{L^\infty}<\infty\}.
\]
In particular we will deal with the weight functions
\[
v_r(x)=\langle x\rangle^r,\ x\in\rd,\quad r\geq 0.
\]


We now recall the following boundedness results on weighted $L^\infty$ spaces (see \cite[Proposition 5]{CNDCDS} and its proof).
\begin{proposition}\label{pro1}
Let $p(D)$ be a Fourier multiplier as in \eqref{simbolo}, \eqref{ellitticita2}. Suppose, in addition that $p_0(\xi)=p_0$ is constant.\par Suppose that there exists $j>0$ such that $p_{m_j}$ is not smooth and let $m$ be the minimum value of such $m_j$ (in particular $m>0$). Then 
\[
p(D)^{-1}:L^\infty_{v_r}\to L^\infty_{v_r}
\]
 continuously for every $0\leq r\leq m+d$.\par If $p_{m_j}$ is smooth for every $j=1,\ldots,h,$ the above conclusion holds for every $r \geq 0$.
\end{proposition}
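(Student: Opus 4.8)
The plan is to realize $p(D)^{-1}$ as convolution against the kernel $\cK=\Fur^{-1}(1/p)$ and to reduce the statement to a pointwise estimate on $\cK$ followed by an elementary weighted Young-type inequality. Since $|p(\xi)|\geq C\langle\xi\rangle^M$ is bounded below, $1/p$ is a bounded continuous symbol, so $p(D)^{-1}$ is well defined on $L^2$ and on $\mathcal{S}'$ and only the weighted $L^\infty$ bound is at stake. For $u\in L^\infty_{v_r}$ one has $|u(y)|\leq\|u\|_{L^\infty_{v_r}}\langle y\rangle^{-r}$, hence
\[
\langle x\rangle^r\,|(\cK\ast u)(x)|\;\leq\;\|u\|_{L^\infty_{v_r}}\;\langle x\rangle^r\int_{\rd}|\cK(x-y)|\,\langle y\rangle^{-r}\,dy ,
\]
and it suffices to bound the right-hand side uniformly in $x$ for $0\leq r\leq m+d$.

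First I would prove the kernel bound $|\cK(x)|\leq C|x|^{-d-m}$ for $|x|\geq1$ together with $\cK\in L^1_{\mathrm{loc}}(\rd)$ (so in fact $\cK\in L^1(\rd)$, since $d+m>d$). To this end, localize in frequency: choose $\chi\in C_c^\infty(\rd)$ with $\chi\equiv1$ near $0$ and split $1/p=\chi/p+(1-\chi)/p$. On $\operatorname{supp}(1-\chi)$ the symbol is smooth and, by \eqref{ellitticita2}, the homogeneity of the $p_{m_j}$ (smooth away from $0$) and the Leibniz rule, satisfies $|\partial^\alpha((1-\chi)/p)(\xi)|\lesssim\langle\xi\rangle^{-M-|\alpha|}$; integrating by parts ($x^\alpha\Fur^{-1}a=c_\alpha\Fur^{-1}(\partial^\alpha a)$, with $\partial^\alpha a\in L^1$ once $|\alpha|$ is large) shows that $\Fur^{-1}((1-\chi)/p)$ is rapidly decreasing at infinity and locally integrable. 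For $\chi/p$, since $p_0\neq0$ is constant we expand $1/p=p_0^{-1}(1+q)^{-1}$ with $q=p_0^{-1}\sum_{j\geq1}p_{m_j}\to0$ at the origin; the geometric series writes $1/p$ near $0$, modulo a remainder of class $C^N$ at $0$, as a finite sum of positively homogeneous terms, whose smooth terms are polynomials and whose non-smooth ones all have degree $\geq m$, the leading one being $-p_0^{-2}p_m$ of degree exactly $m$ (because $m$ is the smallest degree at which non-smoothness occurs, products of smooth factors staying polynomial). Fourier-inverting after multiplication by $\chi$: the polynomial terms and the $C^N$ remainder give rapidly decreasing kernels, and each $\chi$-localized homogeneous piece of degree $\mu$, smooth off the origin, gives — by a dyadic decomposition and non-stationary phase, and using $d+\mu\geq d+m>0$ so that no logarithmic correction arises — a locally integrable kernel of size $\lesssim|x|^{-d-\mu}$ at infinity. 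The slowest such decay comes from the leading non-smooth term $\mu=m$, which is the asserted kernel bound. When all the $p_{m_j}$ are smooth only polynomial terms survive and $\cK$ is rapidly decreasing.

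Given the kernel bound, I would conclude by splitting the $y$-integral (for $|x|\geq2$; the range $|x|\leq2$ is immediate from $\cK\in L^1(\rd)$) into $|y|\leq|x|/2$, $|y|\geq2|x|$, and $|x|/2\leq|y|\leq2|x|$. On the first, $|x-y|\sim|x|$ gives $|\cK(x-y)|\lesssim|x|^{-d-m}$, while $\int_{|y|\leq|x|/2}\langle y\rangle^{-r}\,dy$ is $\lesssim|x|^{d-r}$, $\log|x|$, or $1$ according as $r<d$, $r=d$, $r>d$; multiplying by $\langle x\rangle^r$ yields a bound $\lesssim|x|^{-m}$ (up to a log) when $r\leq d$ and $\lesssim|x|^{\,r-d-m}$ when $r>d$ — bounded precisely for $r\leq m+d$, which is where the upper endpoint of the range enters. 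On the second region $|x-y|\sim|y|\geq1$, so $\int_{|y|\geq2|x|}|y|^{-d-m}\langle y\rangle^{-r}\,dy\lesssim|x|^{-m-r}$, contributing $\lesssim|x|^{-m}$. On the middle annulus $\langle y\rangle^{-r}\sim|x|^{-r}$ and, putting $z=x-y$, $\int_{|z|\leq3|x|}|\cK(z)|\,dz\leq\|\cK\|_{L^1(|z|\leq1)}+\int_1^{3|x|}t^{-1-m}\,dt$, which is bounded uniformly in $x$ \emph{exactly because $m>0$}; multiplying by $\langle x\rangle^r|x|^{-r}\sim1$ closes the estimate. In the all-smooth case $\cK$ is rapidly decreasing, so all three contributions are controlled for every $r\geq0$.

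The main obstacle is the frequency analysis in the second step: isolating $-p_0^{-2}p_m$ as the dominant singularity of $1/p$ at $\xi=0$, estimating the inverse Fourier transforms of the $\chi$-localized homogeneous components (keeping track that the exponents $d+\mu$ stay away from the exceptional non-positive integers, so that the harmless logarithms of the borderline case do not appear — automatic here since $d+\mu\geq d+m>d$), and controlling the remainder of the geometric expansion. Once the kernel bound is in hand the weighted estimate is bookkeeping; it also makes transparent the role of $m>0$, since for $m=0$ the middle-annulus integral $\int_1^{R}t^{-1}\,dt=\log R$ diverges, which is precisely the source of the $\varepsilon$-losses in the case $m=m_0=0$ treated in the present note.
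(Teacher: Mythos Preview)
Your proposal is correct and follows essentially the same route as the argument the paper imports from \cite[Proposition~5]{CNDCDS}: the paper does not re-prove Proposition~\ref{pro1} here but recalls from \cite{CNDCDS} exactly the two ingredients you assemble --- the decay of the inverse Fourier transform of cut-off homogeneous functions (Proposition~\ref{pro4}), which yields your kernel bound $|\cK(x)|\lesssim\langle x\rangle^{-d-m}$, and the weighted convolution inequality for kernels in $L^\infty_{v_s}$ with $s>d$ (Proposition~\ref{pro2-bzero}), which is precisely your three-region splitting. Your geometric-series expansion of $1/p$ near $\xi=0$, with the identification of $-p_0^{-2}p_m$ as the leading non-smooth term, is the standard way to extract the relevant homogeneous components and matches the cited proof.
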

\begin{proposition}\label{pro1-b}
Let $q(\xi)$ be a symbol in the class $S^{-M}(\rd)$, $M>0$, i.e.\ satisfying the estimates 
\[
|\partial^\alpha q(\xi)|\leq C_\alpha \langle \xi\rangle^{-M-|\alpha|},\quad \alpha\in\N^d,\ \xi\in\rd
\]
for some constants $C_\alpha>0$. Then 
\[
q(D):L^\infty_{v_r}\to L^\infty_{v_r}\]
 for every $0\leq r\leq d+M$.
\end{proposition}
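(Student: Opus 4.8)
The plan is to use that $q(D)f=K\ast f$ with $K=\Fur^{-1}q$, and to reduce the claimed boundedness on $L^\infty_{v_r}$ to a weighted integrability property of the kernel. By the elementary inequality $\langle x\rangle\le\sqrt2\,\langle x-y\rangle\langle y\rangle$, valid for all $x,y\in\rd$, one has $\langle x\rangle^r\le 2^{r/2}\langle x-y\rangle^r\langle y\rangle^r$ for every $r\ge0$, and therefore, for $f\in L^\infty_{v_r}$,
\[
\langle x\rangle^r|(K\ast f)(x)|\le 2^{r/2}\int_{\rd}\langle z\rangle^r|K(z)|\;\langle x-z\rangle^r|f(x-z)|\,dz\le 2^{r/2}\,\|\langle\cdot\rangle^r K\|_{L^1}\,\|f\|_{L^\infty_{v_r}}.
\]
So it suffices to show $\langle\cdot\rangle^r K\in L^1(\rd)$ for $0\le r\le d+M$, and I would treat separately the regions $\{|x|\ge1\}$ and $\{|x|\le1\}$.

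On $\{|x|\ge1\}$ the point is that $K$ decays faster than any polynomial. Indeed $x^\alpha K=\Fur^{-1}\big((i\partial_\xi)^\alpha q\big)$, and $(i\partial_\xi)^\alpha q\in S^{-M-|\alpha|}(\rd)$; as soon as $M+|\alpha|>d$ this lies in $L^1(\rd)$, hence $x^\alpha K\in L^\infty(\rd)$. Thus $|K(x)|\le C_N\langle x\rangle^{-N}$ for every $N$, and $\int_{|x|\ge1}\langle x\rangle^r|K(x)|\,dx<\infty$ for every $r\ge0$.

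On $\{|x|\le1\}$ we have $\langle x\rangle^r\asymp1$, so it is enough to check that $K\in L^1_{\mathrm{loc}}(\rd)$, the only possible singularity being at the origin. Here I would perform a dyadic decomposition $q=q_0+\sum_{k\ge1}q_k$ with $q_k$ supported in the shell $\{2^{k-1}\le|\xi|\le2^{k+1}\}$: the symbol estimates give $\|(i\partial_\xi)^\alpha q_k\|_{L^1}\lesssim 2^{k(d-M-|\alpha|)}$, whence $|\Fur^{-1}q_k(x)|\lesssim 2^{k(d-M)}\min\{1,(2^k|x|)^{-N}\}$ for every $N$. Summing the resulting geometric-type series in $k$ yields $|K(x)|\lesssim|x|^{M-d}$ for $0<|x|\le1$ when $0<M<d$ (and $|K(x)|\lesssim 1+|\log|x||$ if $M=d$, $|K(x)|\lesssim1$ if $M>d$); since $M>0$, in every case $K$ is integrable on $\{|x|\le1\}$, which completes the argument.

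The symbol calculus at infinity is routine; the genuinely delicate point is the behaviour of $K$ near the origin, that is, passing from the symbol estimates to the bound $|x|^{M-d}$ through the dyadic decomposition, and it is precisely the hypothesis $M>0$ that keeps this singularity locally integrable. This is the ``critical threshold'' already alluded to in the Introduction: when $M$ is allowed to reach the endpoint the convolution operators cease to be bounded on the full range of weights, which is the phenomenon made precise in the subsequent proposition.
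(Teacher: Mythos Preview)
Your argument is correct. The paper itself does not give a proof of this proposition; it is simply recalled from \cite[Proposition~5]{CNDCDS}. Your route---reducing via Peetre's inequality to the weighted integrability $\langle\cdot\rangle^r K\in L^1(\rd)$, then showing rapid decay of $K$ at infinity (from $x^\alpha K=\Fur^{-1}((i\partial_\xi)^\alpha q)\in L^\infty$ for $|\alpha|$ large) and the local bound $|K(x)|\lesssim|x|^{M-d}$ near the origin through a dyadic decomposition---is standard and self-contained.

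In fact your proof yields more than the stated range: since $K$ is rapidly decreasing at infinity and (by $M>0$) locally integrable near $0$, one has $\langle\cdot\rangle^r K\in L^1(\rd)$ for \emph{every} $r\ge0$, so $q(D)$ is bounded on $L^\infty_{v_r}$ for all $r\ge0$, not only for $0\le r\le d+M$. The restriction in the statement is not sharp; it is merely what is used downstream.

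One small comment on your closing interpretive remark: the ``critical threshold'' discussed in the paper refers to the case $m=0$ for positively homogeneous terms $p_{m_j}$ (which are singular at $\xi=0$), not to the limit $M\to0$ for smooth symbols in $S^{-M}$. The failure for order-zero multipliers is addressed later, in the Remark following Proposition~\ref{pro5-b} (the Hilbert transform), rather than in the proposition immediately after this one.
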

We also need the following classical result about the Fourier transform of homogeneous distributions (for a proof see e.g. \cite[Proposition 4]{CNDCDS}).

\begin{proposition}\label{pro4}
Let $f\in C^\infty(\rd\setminus\{0\})$ be positively homogeneous of degree $r\geq 0$ and $\chi\in C^\infty_0(\rd)$. There exists a constant $C>0$ such that
\[
|\widehat{\chi f}(\xi)|\leq C(1+|\xi|)^{-d-r},\quad \xi\in\rd.
\]
 \end{proposition}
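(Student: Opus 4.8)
The strategy is to exploit the homogeneity of $f$ to rescale the oscillatory integral defining $\widehat{\chi f}$, reducing the claim to a bound that is uniform in a large frequency parameter, and then to establish that bound by separating a neighbourhood of the origin (where $f$ is merely bounded, but where oscillation is not needed) from its complement (where $f$ is smooth, so that repeated integration by parts produces the required decay).

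Since $r\ge 0>-d$, the function $\chi f$ is integrable with compact support, hence $\widehat{\chi f}$ is continuous and bounded; as $(1+|\xi|)^{-d-r}$ is bounded below on $\{|\xi|\le 1\}$, the estimate is trivial there, and we may assume $|\xi|\ge 1$. Write $\xi=\rho\omega$ with $\rho=|\xi|\ge 1$ and $|\omega|=1$. The substitution $x=y/\rho$, together with $f(y/\rho)=\rho^{-r}f(y)$, gives
\[
\widehat{\chi f}(\rho\omega)=\rho^{-d-r}\,I(\rho,\omega),\qquad I(\rho,\omega):=\int_{\rd}e^{-iy\cdot\omega}\,\chi(y/\rho)\,f(y)\,dy,
\]
so it is enough to prove $|I(\rho,\omega)|\le C$ uniformly for $\rho\ge 1$ and $|\omega|=1$. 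Split the $y$-integral according to $|y|<1$ and $|y|\ge 1$. On the first region we bound $|f(y)|\le(\max_{|\eta|=1}|f(\eta)|)\,|y|^{r}$ and $|\chi(y/\rho)|\le\|\chi\|_{L^\infty}$, which controls that part by $C\int_{|y|<1}|y|^{r}\,dy<\infty$, uniformly in $\rho,\omega$.

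On $\{|y|\ge 1\}$ the amplitude $\chi(y/\rho)f(y)$ is of class $C^\infty$, because $f\in C^\infty(\rd\setminus\{0\})$ and $\chi(\cdot/\rho)$ has compact support in $y$, so integrating by parts produces no boundary contribution at infinity, only on the sphere $\{|y|=1\}$. Fix an integer $N>d+r$ and use that the operator $\mathcal{L}=i\,\omega\cdot\nabla_y$ satisfies $\mathcal{L}^N e^{-iy\cdot\omega}=e^{-iy\cdot\omega}$; integrating by parts $N$ times moves the derivatives onto the amplitude. By the Leibniz rule the resulting amplitude is a finite sum of terms $\rho^{-|\beta|}(\partial^\beta\chi)(y/\rho)\,\partial^\gamma f(y)$ with $|\beta|+|\gamma|=N$. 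Using $\rho^{-|\beta|}\le 1$, $|(\partial^\beta\chi)(y/\rho)|\le C_\beta$ together with the support restriction $|y|\le R\rho$ (where $\mathrm{supp}\,\chi\subseteq B_R$), and the homogeneity estimate $|\partial^\gamma f(y)|\le C_\gamma|y|^{r-|\gamma|}$, each term is dominated on $\{|y|\ge 1\}$ by $C\,\rho^{-k}\,|y|^{r-N+k}\,\mathbf{1}_{\{|y|\le R\rho\}}$ with $k=|\beta|$. Integrating in $y$, the choice $N>d+r$ makes every contribution bounded uniformly in $\rho\ge 1$: if $r-N+k+d<0$ the integral over $\{|y|\ge 1\}$ already converges; if $r-N+k+d\ge 0$ it is $\lesssim\rho^{\,r-N+k+d}$, which against the prefactor $\rho^{-k}$ leaves $\rho^{-(N-d-r)}\le 1$; and the borderline exponent produces at worst a $\log\rho$, absorbed by any positive power $\rho^{-k}$. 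The boundary integrals over $\{|y|=1\}$ are treated in the same way and are trivially bounded, since there $|y|=1$. Hence $|I(\rho,\omega)|\le C$ uniformly, which by the reduction above proves the proposition.

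I expect the only genuine obstacle to be the uniform-in-$\rho$ bookkeeping in this last step: one has to check that \emph{all} of the mixed terms --- those in which part of the derivatives fall on the slowly varying cut-off $\chi(\cdot/\rho)$ and the rest on the homogeneous factor $f$ --- together with the boundary terms on $\{|y|=1\}$, stay bounded as $\rho\to\infty$, and it is exactly the threshold $N>d+r$, i.e. the exponent $-d-r$ appearing in the statement, that makes this happen. The remaining steps are routine.
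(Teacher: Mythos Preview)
Your argument is correct. The paper does not actually supply its own proof of this proposition: it simply cites \cite[Proposition 4]{CNDCDS} and remarks that the proof given there for $r>0$ works verbatim for $r=0$. Hence there is no proof in the present paper to compare your approach against. Your rescaling $x=y/\rho$ combined with the split $\{|y|<1\}\cup\{|y|\ge1\}$ and $N$-fold integration by parts on the outer region is a standard and complete route to the estimate; the bookkeeping you flag (mixed Leibniz terms and the boundary contributions on $\{|y|=1\}$) is handled exactly as you describe, and the observation that the borderline exponent forces $k=N-d-r>0$ so that the logarithmic factor is harmless is the right way to close that case.
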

Actually the result in \cite[Proposition 4]{CNDCDS})
was stated for positively homogeneous functions of degree $r>0$ but exactly the same proof holds for $r=0$.\par
The following result was proved in \cite[Proposition 3]{CNDCDS}.
\begin{proposition}\label{pro2-bzero}
Let $Af=K\ast f$ be a convolution operator with integral kernel $K\in L^\infty_{v_s}$, with $s>d$. Then $A$ is bounded on $L^\infty_{v_r}$ for every $0\leq r\leq s$.
\end{proposition}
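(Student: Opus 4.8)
The plan is to reduce the boundedness of $A$ on $L^\infty_{v_r}$ to a single scalar estimate for a weighted convolution of power weights, and then to establish that estimate by decomposing the domain of integration into a region near the point $x$ and a region far from it.

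First I would unwind the hypotheses: $K\in L^\infty_{v_s}$ means $|K(x)|\leq \|K\|_{L^\infty_{v_s}}\langle x\rangle^{-s}$ for a.e.\ $x$, and since $s>d$ this forces $K\in L^1(\rd)$; in particular $Af=K\ast f$ is a well-defined (indeed continuous) function for every $f\in L^\infty(\rd)$, hence for every $f\in L^\infty_{v_r}$. Writing $|f(y)|\leq\|f\|_{L^\infty_{v_r}}\langle y\rangle^{-r}$, one gets
\begin{equation}\label{pf:conv1}
\langle x\rangle^{r}|(K\ast f)(x)|\leq \|K\|_{L^\infty_{v_s}}\,\|f\|_{L^\infty_{v_r}}\,I(x),\qquad I(x):=\langle x\rangle^{r}\int_{\rd}\langle x-y\rangle^{-s}\langle y\rangle^{-r}\,dy,
\end{equation}
so everything reduces to showing $\sup_{x\in\rd}I(x)<\infty$.

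To estimate $I(x)$ I would split the integral over $\{|y|\le|x|/2\}$ and $\{|y|>|x|/2\}$. On the far part, $|y|>|x|/2$ yields $\langle y\rangle\geq c\langle x\rangle$ with $c=1/2$, so $\langle x\rangle^{r}\langle y\rangle^{-r}\leq c^{-r}$ and the contribution is at most $c^{-r}\int_{\rd}\langle z\rangle^{-s}\,dz<\infty$ (here $s>d$ is used once more). On the near part, $|y|\le|x|/2$ gives $|x-y|\geq|x|/2$, hence $\langle x-y\rangle\geq c\langle x\rangle$ and $\langle x-y\rangle^{-s}\le c^{-s}\langle x\rangle^{-s}$, so the contribution is bounded by $c^{-s}\langle x\rangle^{r-s}\int_{|y|\le|x|/2}\langle y\rangle^{-r}\,dy$. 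Then I would distinguish $r>d$, $r=d$, $r<d$: in the first case the last integral is bounded and, since $r\le s$, $\langle x\rangle^{r-s}=O(1)$; in the second case it grows like $\log(2+|x|)$, beaten by $\langle x\rangle^{r-s}=\langle x\rangle^{d-s}$ since $s>d$; in the third case it grows like $\langle x\rangle^{d-r}$, and $\langle x\rangle^{r-s}\langle x\rangle^{d-r}=\langle x\rangle^{d-s}=O(1)$. In every case $I(x)$ is bounded, and \eqref{pf:conv1} finishes the argument, with operator norm at most $\|K\|_{L^\infty_{v_s}}\sup_x I(x)$.

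The computation is elementary and I do not expect a genuine obstacle; the only point that needs care is covering the \emph{full} range $0\le r\le s$, in particular the endpoint $r=s$ and the borderline case $r=d$. A crude application of Peetre's inequality $\langle x\rangle^{r}\lesssim\langle x-y\rangle^{r}\langle y\rangle^{r}$ would only give the range $r<s-d$; it is the near/far decomposition, exploiting that $\langle x-y\rangle$ is comparable to $\langle x\rangle$ once $y$ is close to the origin, that lets one reach $r=s$ and absorb the logarithmic loss occurring at $r=d$.
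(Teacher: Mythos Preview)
Your argument is correct. The paper does not give its own proof of this proposition; it simply cites \cite[Proposition 3]{CNDCDS}. Your near/far splitting is precisely the standard method, and it is the same idea used in the paper's proof of the adjacent borderline case (Proposition \ref{pro2-b}, where $s=d$), except that there a three-region decomposition $|y|\le|x|/2$, $|x|/2\le|y|\le 2|x|$, $|y|\ge 2|x|$ is used; with $s>d$ your two-region version suffices because the factor $\langle x\rangle^{r-s}$ absorbs the growth of $\int_{|y|\le|x|/2}\langle y\rangle^{-r}\,dy$ in all three regimes $r>d$, $r=d$, $r<d$, exactly as you observe.
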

In the limiting case $s=d$ we have the following result, which is also a key estimate in the sequel.
\begin{proposition}\label{pro2-b}
Let $r>0 $. Let $Af=K\ast f$ be a convolution operator with integral kernel $K\in L^\infty_{v_d}$. Then 
\[
A:L^\infty_{v_r}\to L^\infty_{w_r}
\]
continuously, with
\begin{equation}\label{wr}
w_r(x)=\min\{\langle x\rangle^r/\log(1+\langle x\rangle), \langle x\rangle^d\}\asymp \begin{cases}
\langle x\rangle^r/\log(1+\langle x\rangle)\ &{\rm if}\ 0<r\leq d,\\
\langle x\rangle^d \ &{\rm if}\ r>d.
\end{cases}
\end{equation}
\end{proposition}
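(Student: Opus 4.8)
The plan is to estimate $|K\ast f(x)|$ directly for $f\in L^\infty_{v_r}$, splitting the convolution integral according to the relative size of $|y|$ and $|x|$. Write $\|f\|_{L^\infty_{v_r}}=1$ without loss of generality, so that $|f(x-y)|\le \langle x-y\rangle^{-r}$, and use $|K(y)|\le \langle y\rangle^{-d}$. We then need to bound
\[
I(x)=\int_{\rd}\langle y\rangle^{-d}\langle x-y\rangle^{-r}\, dy.
\]
Split the domain into the regions $\{|y|\le |x|/2\}$, $\{|y-x|\le |x|/2\}$, and the complement $\{|y|>|x|/2\ \text{and}\ |y-x|>|x|/2\}$. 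This is the standard ``Peetre-type'' decomposition; the point of the proposition is that the borderline exponent $d$ on the kernel produces a logarithm.

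First I would treat the region $|y|\le|x|/2$: there $\langle x-y\rangle\asymp\langle x\rangle$, so the contribution is $\lesssim \langle x\rangle^{-r}\int_{|y|\le|x|/2}\langle y\rangle^{-d}\,dy\lesssim \langle x\rangle^{-r}\log(1+\langle x\rangle)$. In the region $|y-x|\le|x|/2$ we have $\langle y\rangle\asymp\langle x\rangle$, giving a contribution $\lesssim \langle x\rangle^{-d}\int_{|y-x|\le|x|/2}\langle x-y\rangle^{-r}\,dy$, which is $\lesssim \langle x\rangle^{-d}$ when $r>d$ (integrable tail, bounded integral) and $\lesssim \langle x\rangle^{-d}\cdot\langle x\rangle^{d-r}=\langle x\rangle^{-r}$ when $0<r<d$, and $\lesssim\langle x\rangle^{-d}\log(1+\langle x\rangle)$ when $r=d$. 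In the remaining region both $\langle y\rangle\gtrsim\langle x\rangle$ and $\langle x-y\rangle\gtrsim\langle x\rangle$; here I would use $\langle y\rangle^{-d}\langle x-y\rangle^{-r}\le \langle x\rangle^{-d}\langle x-y\rangle^{-r}$ and integrate in $y$, again getting $\lesssim\langle x\rangle^{-d}$ for $r>d$ and $\lesssim\langle x\rangle^{-r}$ for $r<d$ (and a log at $r=d$). Collecting the three estimates and recalling $w_r^{-1}(x)\asymp\min\{\langle x\rangle^r/\log(1+\langle x\rangle),\langle x\rangle^d\}^{-1}$, i.e. the right-hand side is $\max$ of the two reciprocals, one obtains $|K\ast f(x)|\lesssim w_r(x)^{-1}$, which is exactly the claimed bound; note that in the sub-borderline case $r<d$ the bound $\langle x\rangle^{-r}$ we found is stronger than needed and indeed $\langle x\rangle^r/\log(1+\langle x\rangle)\le\langle x\rangle^r$, so it is consistent with the stated $w_r$ (the logarithmic loss is genuinely attained only near $r=d$ from the first region, but stating it uniformly for all $0<r\le d$ is harmless and matches \eqref{wr}).

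The main obstacle is bookkeeping the logarithmic factor uniformly across the regimes $0<r<d$, $r=d$, $r>d$ and checking that the three regional contributions really combine into the single weight $w_r$ in \eqref{wr} rather than something weaker; in particular one must verify that the $\langle x\rangle^{-r}$ decay coming from the near-origin region is not spoiled, i.e. that for $r\le d$ the dominant term is $\langle x\rangle^{-r}\log(1+\langle x\rangle)$ and for $r>d$ it is $\langle x\rangle^{-d}$, with no cross terms of intermediate size. This is routine but requires care with the constants depending on $r$ (which is allowed, since $r$ is fixed). A minor technical point is that $K$ is only assumed to lie in $L^\infty_{v_d}$, so all the integrals above should be read as essential-supremum bounds on $K$; no regularity of $K$ or $f$ is used, so the argument is purely measure-theoretic and the continuity of $A$ follows from the explicit bound $\|Af\|_{L^\infty_{w_r}}\lesssim \|K\|_{L^\infty_{v_d}}\|f\|_{L^\infty_{v_r}}$.
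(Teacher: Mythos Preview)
Your approach is essentially the same as the paper's: reduce to the pointwise bound on the integral $\int \langle y\rangle^{-d}\langle x-y\rangle^{-r}\,dy$ and split according to the relative size of $y$ and $x$. The paper uses the three zones $|y|\le|x|/2$, $|x|/2\le|y|\le 2|x|$, $|y|\ge 2|x|$, which after the change of variable $y\mapsto x-y$ matches your decomposition up to inessential details.

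There is, however, a genuine slip in your treatment of the third region when $r\le d$. You propose to use $\langle y\rangle^{-d}\langle x-y\rangle^{-r}\le \langle x\rangle^{-d}\langle x-y\rangle^{-r}$ and then integrate in $y$ over $\{|y|>|x|/2,\ |x-y|>|x|/2\}$. But for $r\le d$ the integral $\int_{|x-y|>|x|/2}\langle x-y\rangle^{-r}\,dy$ diverges, so the claimed bounds $\lesssim\langle x\rangle^{-r}$ (for $r<d$) and $\lesssim\langle x\rangle^{-d}\log(1+\langle x\rangle)$ (for $r=d$) do not follow from the inequality you wrote. The fix is immediate: in your third region one has $|y|>|x|/2$ and $|x-y|>|x|/2$, and the triangle inequality gives $|y|\le|x|+|x-y|<3|x-y|$ and symmetrically $|x-y|<3|y|$, so $|y|\asymp|x-y|$ there. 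Hence $\langle y\rangle^{-d}\langle x-y\rangle^{-r}\asymp\langle y\rangle^{-d-r}$ and
\[
\int_{|y|>|x|/2}\langle y\rangle^{-d-r}\,dy\lesssim\langle x\rangle^{-r},
\]
which is what you need. This is exactly how the paper handles its outer region $|y|\ge 2|x|$ (where $|x-y|\asymp|y|$ is automatic). With this correction your argument goes through and coincides with the paper's.
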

\begin{proof}
It is sufficient to prove that, for $r>0$ there exists a constant $C>0$ such that
\[
\int_{\rd} \frac{1}{\langle x-y\rangle^d}\frac{1}{\langle y\rangle^r}\, dy\leq C\max\Big\{\frac{\log(1+\langle x\rangle)}{\langle x\rangle^r}, \frac{1}{\langle x\rangle^d}\Big\},\quad x\in\rd.
\]
We split the integral in the left-hand side in the three regions $|y|\leq |x|/2$, $|x|/2\leq |y|\leq 2|x|$ and $|y|\geq 2|x|$.\par
 When $|y|\leq |x|/2$ we have $|x-y|\asymp |x|$, and therefore 
\begin{align*}
\int_{|y|\leq|x|/2} \frac{1}{\langle x-y\rangle^d}\frac{1}{\langle y\rangle^r}\, dy&\lesssim\frac{1}{\langle x\rangle^d}\int_{|y|\leq|x|/2}\frac{1}{\langle y\rangle^r}\, dy.
\end{align*}
Now, 
\[
\int_{|y|\leq|x|/2}\frac{1}{\langle y\rangle^r}\, dy\lesssim
\begin{cases}
\max\Big\{1,\frac{1}{\langle x\rangle^{r-d}}\Big\}\ &{\rm if}\ r\not=d\\
\log(1+\langle x\rangle) \ &{\rm if}\ r=d.
\end{cases}
\]
Hence we obtain
\[
\int_{|y|\leq|x|/2} \frac{1}{\langle x-y\rangle^d}\frac{1}{\langle y\rangle^r}\, dy\lesssim
\begin{cases}
\max\Big\{\frac{1}{\langle x\rangle^d},\frac{1}{\langle x\rangle^{r}}\Big\}\ &{\rm if}\ r\not=d\\
\log(1+\langle x\rangle)/\langle x\rangle^d \ &{\rm if}\ r=d.
\end{cases}
\]

 When $|x|/2\leq |y|\leq 2|x|$ we have $|y|\asymp |x|$, and therefore
 \begin{align*}
 \int_{|x|/2\leq |y|\leq 2|x|} \frac{1}{\langle x-y\rangle^d}\frac{1}{\langle y\rangle^r}\, dy&\lesssim
 \frac{1}{\langle x\rangle^r}\int_{|x|/2\leq |y|\leq 2|x|}\frac{1}{\langle x-y\rangle^d}\, dy\\
 &\leq \frac{1}{\langle x\rangle^r} \int_{|y-x|\leq 3|x|}\frac{1}{\langle x-y\rangle^d}\, dy\\
 &= \frac{1}{\langle x\rangle^r} \int_{|y|\leq 3|x|}\frac{1}{\langle y\rangle^d}\, dy\lesssim\frac{\log(1+\langle x\rangle)}{\langle x\rangle^r}.
 \end{align*}
 Finally for $|y|\geq 2|x|$ we have $|x-y|\asymp |y|$ and using the assumption $r>0$ we obtain
 \[
\int_{|y|\geq 2|x|}\frac{1}{\langle x-y\rangle^d} \frac{1}{\langle y\rangle^r}\, dy\lesssim \int_{|y|\geq 2|x|} \frac{1}{\langle y\rangle^{r+d}}\, dy\lesssim\frac{1}{\langle x\rangle^r}.
\]
\end{proof}

The following result shows that similar boundedness estimates hold for $p(D)^{-1}$, if $p(D)$ satisfies \eqref{simbolo} and \eqref{ellitticita2}.

\begin{proposition}\label{pro5-b}
Let $p(D)$ be a Fourier multiplier satisfying \eqref{simbolo}, \eqref{ellitticita2}, and $r>0$. Then
\[
p(D)^{-1}=p^{-1}(D): L^\infty_{v_r}\to L^\infty_{w_r}
\]
continuously, with $w_r$ as in \eqref{wr}.
\end{proposition}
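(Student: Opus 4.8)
The plan is to deduce everything from a single kernel estimate. Since $p(D)^{-1}=p^{-1}(D)$ is the convolution operator with kernel $K=\Fur^{-1}(p^{-1})$, once we know that $K\in L^\infty_{v_d}$, i.e.\ $|K(x)|\lesssim\langle x\rangle^{-d}$, Proposition \ref{pro2-b} applied to $A=p(D)^{-1}$ gives at once the claimed continuity $L^\infty_{v_r}\to L^\infty_{w_r}$ for every $r>0$, with $w_r$ as in \eqref{wr}. So the whole point is the decay of $K$; the $\log$-loss and the precise shape of $w_r$ are already built into Proposition \ref{pro2-b}, and one does not expect $K$ to decay faster than $\langle x\rangle^{-d}$ in general (this is consistent with the example \eqref{solLC}).

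To estimate $K$, I would split $p^{-1}=\chi\, p^{-1}+(1-\chi)\,p^{-1}$ with $\chi\in C^\infty_0(\rd)$ equal to $1$ near $\xi=0$. For the high-frequency part, note that \eqref{ellitticita2} forces the top-order term $p_M$ to be non-vanishing on the unit sphere, so, writing $p=p_M\bigl(1+\sum_{j<h}p_{m_j}/p_M\bigr)$ for large $|\xi|$, a routine check shows $(1-\chi)p^{-1}\in S^{-M}(\rd)$; being moreover smooth on all of $\rd$, its inverse Fourier transform decays faster than any power of $\langle x\rangle$ (differentiate under the integral: $\partial^\alpha[(1-\chi)p^{-1}]\in L^1$ as soon as $|\alpha|>d-M$). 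This contributes an $O(\langle x\rangle^{-N})$ term to $K$, for every $N$.

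The genuine work is the low-frequency part $\chi p^{-1}$, which carries the non-smoothness at $\xi=0$ coming from the degree-zero term $p_0$. First I would record that \eqref{ellitticita2}, in the limit $\xi\to0$, forces $p_0(\xi)\neq0$, whence $|p_0(\xi)|\geq c_0>0$ on the unit sphere by homogeneity and compactness. Setting $r(\xi):=p(\xi)-p_0(\xi)=\sum_{j\geq1}p_{m_j}(\xi)=O(|\xi|^{m_1})$ near the origin, on a small enough ball the Neumann expansion $p^{-1}=p_0^{-1}\sum_{k\geq0}(-1)^k(r/p_0)^k$ converges; I would then truncate it at an order $N$ with $(N+1)m_1>d+1$, so that $\chi p^{-1}$ becomes a finite sum of terms $\chi\cdot(\text{positively homogeneous of degree }0)$ (the $k=0$ term), $\chi\cdot(\text{positively homogeneous of degree}\geq m_1)$ (the terms $1\leq k\leq N$, all smooth away from $0$), plus a remainder $\chi R_N$ with $R_N$ and its derivatives up to order $d+1$ of size $O(|\xi|^{(N+1)m_1-|\beta|})$ near $0$, hence $\chi R_N\in C^{d+1}_0(\rd)$. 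Applying Proposition \ref{pro4} to each homogeneous piece bounds the corresponding part of $K$ by $\langle x\rangle^{-d}$ (for the degree-$0$ piece) and by $\langle x\rangle^{-d-m_1}$ (for the others), while $|\Fur^{-1}(\chi R_N)(x)|\lesssim\langle x\rangle^{-d-1}$ follows by integration by parts. Summing up, $|K(x)|\lesssim\langle x\rangle^{-d}$, and Proposition \ref{pro2-b} concludes.

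The step I expect to be the real obstacle is the bookkeeping for the remainder $R_N$: since $p_{m_1}$ may be merely Hölder continuous at the origin (think of $p_{m_1}(\xi)=|\xi|$ when $m_1<1$), one has to make sure that truncating the Neumann series high enough genuinely produces a $C^{d+1}$ function, i.e.\ that all derivatives of order $\leq d+1$ of the tail are controlled by the first omitted homogeneous term, and to handle the convergence of the series after differentiation. This is also exactly where the present case $m=m_0=0$ departs from the earlier $m>0$ situation: the degree-zero term $p_0^{-1}$ produces the critical decay $\langle x\rangle^{-d}$ of the kernel, and hence the $\log$-loss, in contrast with Proposition \ref{pro1}.
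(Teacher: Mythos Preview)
Your reduction to a single global kernel bound $K=\Fur^{-1}(p^{-1})\in L^\infty_{v_d}$ breaks down at the high-frequency step. From $(1-\chi)p^{-1}\in S^{-M}$ you only get $\partial^\alpha\big[(1-\chi)p^{-1}\big]\in L^1$ for $|\alpha|>d-M$, hence $x^\alpha K_1\in L^\infty$ for those $\alpha$; this controls $K_1$ away from the origin but says nothing about $K_1$ near $0$ when $M\leq d$. In that range the kernel of an $S^{-M}$ multiplier is typically singular at the origin, of size $|x|^{M-d}$ (with a logarithm when $M=d$); think of the Bessel kernel of order $M<d$, or, in the very example \eqref{simbcons} with $d=M=1$, the contribution $\Fur^{-1}\big[(1-\chi(\xi))/(i\xi)\big]$, which carries a ${\rm P.V.}\,1/x$ singularity. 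Thus $K_1\notin L^\infty$ in general, the full kernel $K=K_1+K_2$ is not in $L^\infty_{v_d}$ (the low-frequency piece $K_2$ is smooth, so no cancellation occurs), and Proposition \ref{pro2-b} cannot be applied to $p(D)^{-1}$ as a whole.

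The paper circumvents this by never asking for a pointwise bound on the high-frequency kernel: it invokes Proposition \ref{pro1-b} to get $q_1(D)=(1-\chi)p^{-1}(D):L^\infty_{v_r}\to L^\infty_{v_r}$ directly as an operator statement (this is where the local $L^1$ integrability of the kernel, rather than its boundedness, is what matters), and reserves the kernel route via Propositions \ref{pro4} and \ref{pro2-b} for the genuinely critical factor $\chi(\xi)/p_0(\xi)$. Your low-frequency Neumann expansion is in fact a valid alternative to the paper's factorization $\chi p^{-1}=\dfrac{\chi}{p_0}\cdot\dfrac{1}{1+\sum_{j\geq1}p_{m_j}/p_0}$ together with Proposition \ref{pro1}; the remainder $R_N=(-r/p_0)^{N+1}p^{-1}$ does satisfy $|\partial^\beta R_N(\xi)|\lesssim|\xi|^{(N+1)m_1-|\beta|}$ near $0$, so the $C^{d+1}$ claim goes through. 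The only repair needed is to treat $q_1(D)$ separately as an operator on $L^\infty_{v_r}$ (e.g.\ via Proposition \ref{pro1-b}) and apply Proposition \ref{pro2-b} to $q_2(D)$ alone.
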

\begin{proof}
We consider separately the low and high frequency components of $p(\xi)$. Namely, let $\chi\in C^\infty_0(\R^d)$, $\chi=1$ in a neighborhood of $0$. We write 
\[
p(\xi)^{-1}=\underbrace{(1-\chi(\xi))p(\xi)^{-1}}_{q_1(\xi)}+\underbrace{\chi(\xi)p(\xi)^{-1}}_{q_2(\xi)}.
\]
By \eqref {ellitticita2} we have $q_1\in S^{-M}$ and therefore by Proposition \ref{pro1-b}  we have 
\[
q_1(D): L^\infty_{v_r}\to L^\infty_{v_r}\quad {\rm for}\ 0<r\leq d+M,
\]
as well as
\[
q_1(D): L^\infty_{v_r}\hookrightarrow L^\infty_{v_{d+M}}\to L^\infty_{v_{d+M}} \hookrightarrow L^\infty_{v_d}=L^\infty_{w_r}\quad {\rm for}\  r>d+M
\]
(recall that, for $r>d$, $w_r(x)\asymp \langle x\rangle^{d}=v_d(x)$).
In both cases we obtain $q_1(D): L^\infty_{v_r}\to L^\infty_{w_r}$.\par
Concerning $q_2(D)$ we write 
\[
q_2(\xi)=\frac{\chi(\xi)}{p(\xi)}=\frac{\chi(\xi)}{p_0(\xi)}\cdot \frac{1}{1+\sum_{j=1}^h p_{m_j}(\xi)/p_0(\xi)}
\]
(as observed in Introduction, \eqref{ellitticita2} implies $p_0(\xi)\not=0$ for $\xi\not=0$).\par
By Proposition \ref{pro1} there exists $m>0$ such that the second factor in the right-hand side gives rise to a bounded operator $L^\infty_{v_r}\to L^\infty_{v_r}$ if $0<r\leq d+m$. On the other hand by Proposition \ref{pro4} the first factor $\chi(\xi)/p_0(\xi)$ has (inverse) Fourier transform $K\in L^\infty_{v_d}$ and therefore the corresponding convolution operator is bounded $L^\infty_{v_r}\to L^\infty_{w_r}$ by Proposition \ref{pro2-b}. This gives 
\[
q_2(D): L^\infty_{v_r}\to L^\infty_{w_r}\quad {\rm for}\ 0<r\leq d+m.
\]
The case $r>m+d$ follows from the previous case, using the inclusions of the weighted $L^\infty$ spaces:
\[
q_2(D): L^\infty_{v_r}\hookrightarrow L^\infty_{v_{d+m}}\to L^\infty_{w_{d+m}} = L^\infty_{w_r}\quad {\rm for}\  r>d+m.
\]

\end{proof}
\begin{remark}
Observe that Proposition \ref{pro5-b} does not extend to symbols $p(D)$ of order $0$, namely for $p(D)=p_0(D)$. Consider for example, in dimension $1$, the symbol $p(\xi)={\rm sign}\,\xi$. The corresponding operator $p(D)$ is the Hilbert transform, that is the convolution with the principal value distribution ${\rm P.V.}\, 1/x$ (up to a multiplicative constant). Now, if this operator were bounded $L^\infty_{v_r}\to L^\infty$ for some $r>0$, then for every compact $K\subset\rd$ we would have the estimate
\[
\|{\rm P.V.\,} \frac{1}{x}\ast f\|_{L^\infty}\leq C_K\|f\|_{L^\infty}
\] 
for all test functions $f\in C^\infty_0(K)$. This implies that the distribution ${\rm P.V.}\, 1/x$ has order $0$, and this is not the case.\par
Hence, it is essential for $p(D)$ in \eqref{simbolo} to have order $M>0$.
\end{remark}

We end this section with the the following well-known interpolation inequality, which is proved e.g. in \cite[Proposition 1]{CNDCDS}.
\begin{proposition}\label{interp}
Given $0\leq \ell\leq n$, there exists a constant $C>0$ such that the following inequality holds.
Let $I=I_1\times\ldots\times I_d\subset\rd$, where each $I_j$, $j=1,\ldots,d$, is an interval of the form $[a,+\infty)$ or $(-\infty,a]$. Then
\begin{equation}\label{inter}
\|D^\ell u\|_{L^\infty(I)}\leq C\|u\|_{L^\infty(I)}^{1-\ell/n}\|D^n u\|_{L^\infty(I)}^{\ell/n},
\end{equation}
where we set $\|D^k u\|_{L^\infty(I)}:=\sup_{|\alpha|=k}\|\partial^\alpha u\|_{L^\infty(I)}$, $k\in\mathbb{N}$. \par
In particular, if $u\in L^\infty_{v_s}$ and $D^n u\in L^\infty_{v_r}$ then $D^l u\in L^\infty_{v_\nu}$ with $\nu=(1-\ell/n)s+(\ell/n)r$.

\end{proposition}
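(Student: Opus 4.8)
The inequality \eqref{inter} is a Landau--Kolmogorov type interpolation estimate, and the plan is to reduce it to the one--dimensional case on a half--line, which one then settles by a single Taylor expansion at equally spaced nodes. First I would prove the following: for $g\in C^n(J)$ with $J=[a,+\infty)$ (the case $J=(-\infty,a]$ being symmetric) one has
\[
\|g^{(\ell)}\|_{L^\infty(J)}\le C\,\|g\|_{L^\infty(J)}^{1-\ell/n}\|g^{(n)}\|_{L^\infty(J)}^{\ell/n},\qquad C=C(n,\ell).
\]
For this I would fix $t_0\in J$ and $h>0$, apply Taylor's formula with integral remainder at $t_0$ to the points $t_0+jh$, $j=0,\dots,n-1$ (all of which lie in $J$ precisely because $J$ is a half--line), and view the resulting $n$ identities as a linear system in the unknowns $g^{(k)}(t_0)$, $k=0,\dots,n-1$; its coefficient matrix $(j^k/k!)_{j,k}$ is a scaled Vandermonde matrix, hence invertible with inverse independent of $h$. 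Solving for $g^{(\ell)}(t_0)$ and bounding each remainder by a constant times $h^n\|g^{(n)}\|_{L^\infty(J)}$ gives $|g^{(\ell)}(t_0)|\le C_n\big(h^{-\ell}\|g\|_{L^\infty(J)}+h^{n-\ell}\|g^{(n)}\|_{L^\infty(J)}\big)$, and minimizing over $h>0$ yields the claim; the degenerate cases are immediate once one notes that a polynomial of degree $<n$ bounded on a half--line is constant.

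Next I would pass to $\rd$. Since $I$ is a product of half--lines, the cone $\Gamma_I$ of directions along which $I$ is unbounded is a (possibly reflected) orthant. For $x\in I$ and $\omega\in\Gamma_I$ with $|\omega|\le1$, the function $t\mapsto u(x+t\omega)$ is defined for $t\ge0$, has sup--norm $\le\|u\|_{L^\infty(I)}$, and its $n$--th derivative is a linear combination with bounded coefficients of the partials $\partial^\beta u(x+t\omega)$, $|\beta|=n$; so the one--dimensional inequality gives $|D_\omega^\ell u(x)|\le C\,\|u\|_{L^\infty(I)}^{1-\ell/n}\|D^n u\|_{L^\infty(I)}^{\ell/n}$. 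Finally, every partial derivative $\partial^\alpha u$ with $|\alpha|=\ell$ is a fixed linear combination of directional derivatives $D_\omega^\ell u$ with $\omega$ running over suitable nonnegative integer combinations of the coordinate directions $\pm e_1,\dots,\pm e_d$ pointing along $\Gamma_I$ (polarization of the symmetric $\ell$--linear form $D^\ell u$); combining this with the previous bound proves \eqref{inter} on $I$.

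For the weighted consequence I would argue pointwise. Given $x_0\in\rd$, let $I=I(x_0)$ be the product of the half--lines $[x_{0,j},+\infty)$ when $x_{0,j}\ge0$ and $(-\infty,x_{0,j}]$ when $x_{0,j}<0$, so that $x_0\in I$ and $|x_j|\ge|x_{0,j}|$ for all $j$ and all $x\in I$, whence $\langle x\rangle\ge\langle x_0\rangle$ on $I$. Since $r,s\ge0$ this gives $\|u\|_{L^\infty(I)}\le\langle x_0\rangle^{-s}\|u\|_{L^\infty_{v_s}}$ and $\|D^n u\|_{L^\infty(I)}\le\langle x_0\rangle^{-r}\|D^n u\|_{L^\infty_{v_r}}$, and then \eqref{inter} on $I$ yields $\langle x_0\rangle^{\nu}|\partial^\alpha u(x_0)|\le C\,\|u\|_{L^\infty_{v_s}}^{1-\ell/n}\|D^n u\|_{L^\infty_{v_r}}^{\ell/n}$ uniformly in $x_0$ and in $\alpha$ with $|\alpha|=\ell$, i.e.\ $D^\ell u\in L^\infty_{v_\nu}$.

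The step I expect to demand the most care is organizational rather than analytic: the passage to dimension $d>1$ on the corner domain $I$, where one cannot differentiate coordinate by coordinate (that would introduce intermediate mixed derivatives not controlled by the hypotheses) and must instead use directional derivatives tangent to $\Gamma_I$ together with polarization. The one--variable core is the classical Taylor--and--optimize argument; the role of the half--line hypothesis is precisely to make the $n$ sampling points available with a step $h$ that can be chosen freely and uniformly, whereas on a short bounded interval the constant would blow up.
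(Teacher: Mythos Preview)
Your argument is correct. The paper itself does not prove this proposition; it merely records it as well known and points to \cite[Proposition~1]{CNDCDS} for a proof, so there is no ``paper's own proof'' to compare against. Your route---the Taylor/Vandermonde argument on a half--line, followed by passage to the product domain via directional derivatives along the cone $\Gamma_I$ and the polarization identity
\[
\ell!\,\partial_{j_1}\cdots\partial_{j_\ell}u(x)=\sum_{\emptyset\neq S\subseteq\{1,\dots,\ell\}}(-1)^{\ell-|S|}\,D_{\omega_S}^{\,\ell}u(x),\qquad \omega_S=\sum_{i\in S} e_{j_i},
\]
which uses only \emph{nonnegative} integer combinations of the coordinate directions and therefore stays inside $\Gamma_I$---is exactly the kind of self--contained proof one expects for this Landau--Kolmogorov inequality on a corner domain. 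The constants produced depend only on $n,\ell,d$ (through the Vandermonde inverse, the multinomial bound on $g^{(n)}$, and the polarization coefficients), hence are uniform in the endpoint data of $I$, as required. Your derivation of the weighted consequence by choosing, for each $x_0$, the orthant $I(x_0)$ on which $\langle x\rangle\ge\langle x_0\rangle$ is also the standard and correct way to extract $D^\ell u\in L^\infty_{v_\nu}$ from \eqref{inter}; note that it uses $r,s\ge0$, which is consistent with the paper's convention on the weights $v_r$.
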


\section{Proofs of the results}

\begin{proof}[Proof of Theorem \ref{mainteo}]
We write the equation in integral form as
\[
u=p(D)^{-1}(F(u)).
\]
Since the solution $u$ is by assumption in $L^\infty_{v_{\varepsilon_o}}$, $\epsilon_0>0$,  we have $F(u)\in L^\infty_{v_{p\varepsilon_o}}$.
Now, we can apply Proposition \ref{pro5-b} with $r=p\epsilon_0$ and we obtain $u\in L^\infty_{w_r}$. If $r>d$ then $w_r(x)=\langle x\rangle^d$ and the desired result is proved. If instead $r\leq d$ we have $u\in L^\infty_{w_r}\subset L^\infty_{v_s}$, with $s=\epsilon_0+(p-1)\epsilon_0/2$ (we used $\log(1+\langle x\rangle)\lesssim \langle x\rangle^{(p-1)\epsilon_0/2}$).\par By applying this argument repeatedly, after a finite number of steps we arrive to $u\in L^\infty_{w_r}$, for some $r>d$, which concludes the proof.
\end{proof}

\begin{proof}[Proof of Theorem \ref{mainteo2}]
The proof of \eqref{stime2} is similar to that of \cite[Theorem 1.2]{CNDCDS}, but we provide the details for the benefit of the reader.\par
 First of all we show that $u \in H^s$ for any $s \in \R$. Observe that we can write
$$u=p(D)^{-1}(F(u)).$$ By Theorem \ref{mainteo}, we have $u \in L^\infty \cap L^2,$ which gives $F(u) \in L^2$. Moreover the operator $p(D)^{-1}$ has symbol $p(\xi)^{-1}$, hence by \eqref{ellitticita2} it maps continuously $H^s$ into $H^{s+M}$ for every $s \in \R$. This gives $ u \in H^M \cap L^\infty$. Iterating this argument we obtain $u \in H^s$ for every $ s \in \R$. \par To prove \eqref{stime2} we argue by induction on $|\alpha|$. For $|\alpha|=0$ we have in fact the stronger information $u\in L^\infty_{v_d}$ by Theorem \ref{mainteo}. Assume now\eqref{stime2} to be true for $|\alpha| \leq N-1$ and let us prove it for $|\alpha|=N$. First of all, by Proposition \ref{interp} with $l =1, n$ large enough and $u$ replaced by $D^{N-1}u$ we have that 
\begin{equation}\label{dnu}
D^N u \in L^\infty_{v_{d+N-1-\varepsilon}}, \qquad |\alpha|=N
\end{equation} for every $\varepsilon >0$. Let $\chi \in C_0^\infty(\R^d), \chi =1$ around the origin. By introducing commutators in the equation \eqref{equation} we have, for $|\beta| \leq |\alpha| =N$:
\begin{multline}\label{stima5}
x^\beta\partial^\alpha u
=\underbrace{p(D)^{-1}[(\chi p)(D),x^\beta]\partial^\alpha u}_{=:q_1(D)u}
+\underbrace{p(D)^{-1}[((1-\chi) p)(D),x^\beta]\partial^{\alpha} u}_{=:q_2(D)u}\\
+p(D)^{-1}(x^\beta \partial^\alpha F(u)).
\end{multline}
To obtain \eqref{stime2} it is sufficient to prove that the three terms in the right-hand side of \eqref{stima5} are in $L^\infty_{v_{d-\varepsilon}}$ for every $\varepsilon >0$. We first consider $q_1(D)u$. By direct computation we can write
\[
[(\chi p)(D),x^\beta]\partial^\alpha u=-\sum_{0\not=\gamma\leq\beta}i^{|\gamma|}\binom{\beta}{\gamma}\big(\partial^\gamma_\xi (\chi p)\big)(D)(x^{\beta-\gamma}\partial^\alpha u),
\]
where the derivatives of $\chi p$ in the right-hand side are meant in the sense of distributions.
By the inverse Leibniz formula\footnote{Namely, 
\[
x^\beta\partial^\alpha
u(x)=\sum_{\gamma\leq\beta,\,\gamma\leq\alpha}\frac{(-1)^{|\gamma|}\beta!}{(\beta-\gamma)!}
\binom{\alpha}{\gamma}\partial^{\alpha-\gamma}(x^{\beta-\gamma}u(x)).
\]
} we obtain (since $|\beta|\leq|\alpha|$)
\begin{equation}\label{stima6}
[(\chi p)(D),x^\beta]\partial^\alpha u=\sum_{0\not=\gamma\leq\beta}\sum_{\tilde{\alpha},\tilde{\beta}:|\tilde{\beta}|\leq|\tilde{\alpha}|<|\alpha|}  
C_{\alpha,\beta,\tilde{\alpha},\tilde{\beta}}
(\partial^{\ga}_\xi (\chi p))(D)\partial^{\tilde{\gamma}} (x^{\tilde{\beta}}\partial^{\tilde{\alpha}}u).
\end{equation}
where $\tilde{\gamma}$ is a suitable multi-index depending on $\alpha,\beta,\tilde{\alpha},\tilde{\beta},\gamma$, with $|\tilde{\gamma}|=|\gamma|$, and $C_{\alpha,\beta,\tilde{\alpha},\tilde{\beta}}$ are suitable constants. Moreover we have $x^{\tilde{\beta}}\partial^{\tilde{\alpha}}u\in L^\infty_{v_{d-\epsilon}}$ by the inductive hypothesis. Now we observe that the operator $\partial^{\ga}_\xi (\chi p)(D)\partial^{\tilde{\gamma}}$ has a symbol given by a sum of homogeneous functions of nonnegative degree multiplied by cut-off functions, hence by Proposition \ref{pro4} we deduce that its integral kernel belongs to $L_{v_d}^\infty$, so that it maps $L_{v_{d-\epsilon}}^\infty\to L_{w_{d-\epsilon}}^\infty\hookrightarrow L^\infty_{v_{d-2\varepsilon}}$ by Proposition \ref{pro2-b}. The same happens for $p(D)^{-1}:L^\infty_{v_{d-2\varepsilon}}\to L^\infty_{v_{d-3\varepsilon}}$, and therefore we have $q_1(D):L_{v_{d-\epsilon}}^\infty \to L^\infty_{v_{d-3\varepsilon}}$ for every $\varepsilon >0$, and therefore $q_1(D)u\in L^\infty_{v_{d-\epsilon}}$ for every $\epsilon>0$. \par 
Concerning $q_2(D)u$, by the symbolic calculus we have 
\begin{align}
q_2(D)u&= p(D)^{-1}[((1-\chi) p)(D),x^\beta]\partial^\alpha u\nonumber\\
&=-\sum_{0\not=\gamma\leq\beta}i^{|\gamma|}\binom{\beta}{\gamma}p(D)^{-1} \big(\partial^\gamma_\xi ((1-\chi)p)\big)(D)(x^{\beta-\gamma}\partial^\alpha u).\label{qpq}
\end{align}
Using \eqref{dnu} and $|\beta-\gamma|\leq N-1$ we see that $x^{\beta-\gamma}\partial^\alpha u\in L^\infty_{d-\epsilon}$. On the other hand, the multiplier $p(D)^{-1}(\partial^{\ga}_\xi ((1-\chi) p))(D)$ has a smooth symbol of negative order and therefore by Proposition \ref{pro1-b} it maps $L_{v_{r}}^\infty$ into itself continuously if $0<r\leq d$.  Hence $q_2(D) u\in L^\infty_{d-\epsilon}$. 

\par
Finally, concerning the nonlinear term, by the Fa\`a di Bruno formula and the interpolation inequalities we have 
\begin{equation}\label{eqq}
\|D^N F(u)\|_{L^\infty(I)}\leq C\sum_{1\leq \nu\leq N} \|F'\|_{C^{\nu-1}}\|u\|_{L^\infty(I)}^{\nu-1}\|D^N u\|_{L^\infty(I)}.
\end{equation}
cf. \cite[Formula (3.1.9)]{taylor}. In this formula, we use the same notation as in Proposition \ref{interp}, and the norm of $F'$ is meant on the range of $u$.\par We now estimate each term by observing that, for $\nu>1$ (integer)  we have $|u|^{\nu-1}\in L^\infty_{(\nu-1)d}$, which implies $\langle x\rangle |u|^{\nu-1}\in L^\infty$; when $\nu=1$ instead $|F'(u)|\leq C|u|\in L^\infty_{v_d}$, so that $\langle x\rangle F'(u)\in L^\infty$. On the other hand, we also know from \eqref{dnu} that $\partial^\alpha u\in L^\infty_{v_{d+N-1-\epsilon}}$ if $|\alpha|=N$. We deduce by \eqref{eqq} that $D^N F(u)\in L^\infty_{v_{d+N-\varepsilon}}$. By Proposition \ref{pro5-b} we get $p(D)^{-1}(x^\beta\partial^\alpha F(u))\in L^\infty_{w_{d-\varepsilon}} \subset L^\infty_{v_{d-2\varepsilon}} $ for every $\varepsilon >0$. The estimate \eqref{stime2} is then proved.\par
Let us now prove \eqref{stime2bisbis} in the case $d=1$. We argue as in the first part of the present proof, assuming \eqref{stime2bisbis} for $|\alpha|\leq N-1$ and prove it for $|\alpha| \leq N$. Moreover by the first part of the present proof we already know that 
\begin{equation}\label{pfpp}
D^N u\in L^\infty_{v_{1+N-\varepsilon}}.
\end{equation}
We will exploit the particularly simple structure of positively homogeneous functions in dimension $d=1$, which allows us to eliminate the above $\epsilon$-loss in the decay.\par 
Let us estimate $q_1(D)u$ in \eqref{stima5}. We observe that in \eqref{stima6} we have $ x^{\tilde{\beta}}\partial^{\tilde{\alpha}}u\in L^\infty_{v_1}$ by the inductive hypothesis. Moreover we can split further the right-hand side of \eqref{stima6}: by the Leibniz formula and using $p(\xi)=\sum_{j=0}^h p_j(\xi)$ we see that the operator $p(D)^{-1}\partial^{\ga}_\xi (\chi p)(D)\partial^{\tilde{\gamma}}$ (now with $\tilde{\gamma}=\gamma$ because $|\tilde{\gamma}|=|\gamma|$ and $d=1$) has as symbol $i^\gamma$ times the function
\[
\sum_{0\leq\mu<\gamma}\binom{\gamma}{\mu}p(\xi)^{-1}\partial^{\gamma-\mu}\chi(\xi)\xi^\gamma\partial^\mu p(\xi)+\sum_{j=1}^h\chi(\xi)p(\xi)^{-1}\xi^\gamma\partial^\gamma p_j(\xi)+\chi(\xi)p(\xi)^{-1}\xi^\gamma\partial^\gamma p_0(\xi).
\]
The first sum is a smooth compactly supported symbol and therefore by Proposition \ref{pro1-b} gives rise to a bounded operator $L^\infty_{v_1}\to L^\infty_{v_1}$.\par
 For the second term we write
\[
\chi(\xi)p(\xi)^{-1}\xi^\gamma\partial^\gamma p_j(\xi)=\frac{\chi(\xi)\xi^\gamma\partial^\gamma p_j(\xi)}{p_0(\xi)}\cdot \frac{1}{1+\sum_{j=1}^h p_{m_j}(\xi)/p_0(\xi)}
\]
which therefore gives rise to a bounded operator $L^\infty_{v_1}\to L^\infty_{v_1}$ by Propositions \ref{pro4}, \ref{pro2-bzero} and \ref{pro1}, since $\xi^\gamma\partial^\gamma p_j(\xi)/p_0(\xi)$ has degree $m_j>0$.\par
 Finally the last term \[
\chi(\xi)p(\xi)^{-1} \xi^\gamma\partial^\gamma p_0(\xi)\]
 vanishes identically, because in dimension $1$ we have $p_0(\xi)=p_0^+$ if $\xi>0$ and $p_0(\xi)=p_0^-$ for $\xi<0$ ($p_0^+,p_0^-\in\C$), so that $\partial^\gamma p_0(\xi)=(p_0^+ -p_0^-)\partial^{\gamma-1}\delta$ and $\xi^\gamma \partial^{\gamma-1}\delta=0$. This is in fact the only point where the assumption $d=1$ plays an essential role.
\par
Concerning the term $q_2(D)u$ in \eqref{qpq}, we can use the information \eqref{pfpp} which implies $x^{\beta-\gamma}\partial^\alpha u\in L^\infty_{v_{2-\epsilon}}\subset L^\infty_{v_{1}}$ (if $\epsilon<1$) because $|\beta-\gamma|\leq N-1$, and we conclude that $q_2(D)u\in L^\infty_{v_1}$, since $p(D)^{-1}(\partial^{\ga}_\xi ((1-\chi) p))(D)$ is bounded on $L^\infty_{v_1}$, as already observed.\par
Finally, for the nonlinear term $p(D)^{-1}(x^\beta \partial^\alpha F(u))$ in \eqref{stima5} we can argue as above, but now we use again the information \eqref{pfpp} in place of \eqref{dnu} (with $d=1$). By \eqref{eqq} we now obtain $D^N F(u)\in L^\infty_{v_{2+N-\varepsilon}}$. By Proposition \ref{pro5-b} we have $p(D)^{-1}:L^\infty_{v_{2-\epsilon}}\to L^\infty_{w_{2-\epsilon}}=L^\infty_{v_1}$ (if $\epsilon<1$), so that $p(D)^{-1}(x^\beta\partial^\alpha F(u))\in L^\infty_{v_1}$. This concludes the proof of \eqref{stime2bisbis}.

\end{proof}

\end{document}